\theoremstyle{plain}
\newtheorem{lemma}{Lemma}[section]
\newtheorem{prop}[lemma]{Proposition}
\newtheorem{theo}[lemma]{Theorem}
\theoremstyle{remark}
\newtheorem{rem}[lemma]{Remark}
\theoremstyle{definition}
\newcommand{\Su}{\mathbb{S}}
\newcommand{\Rel}{\mathcal{R}}
\newcommand{\mxl}{\textnormal{mxl}}
\newcommand{\mnl}{\textnormal{mnl}}
\newcommand{\op}{\textnormal{op}}
\newcommand{\B}{\mathcal{B}_X}
\newcommand{\Ss}{\mathcal{S}}
\newcommand{\X}{\mathcal{X}}
\newcommand{\K}{\mathcal{K}}
\newcommand{\N}{\mathbb{N}}
\newcommand{\Z}{\mathbb{Z}}
\newcommand{\tq}{\;/\;}
\title{Smallest homotopically trivial non-contractible spaces}
\author{Nicol\'as Cianci}
\address{Facultad de Ciencias Exactas y Naturales, Universidad Nacional de Cuyo and CONICET, Mendoza, Argentina.}
\email{nicocian@gmail.com}
\author{Miguel Ottina}
\address{Facultad de Ciencias Exactas y Naturales, Universidad Nacional de Cuyo, Mendoza, Argentina.}
\email{mottina@fcen.uncu.edu.ar}
\keywords{Finite spaces, Posets, Weakly contractible spaces.}
\subjclass[2010]{55P15 \and 06A99 (Primary)}
\thanks{This research was partially supported by grant M015 of SeCTyP, UNCuyo. The first author was also partially supported by a CONICET doctoral fellowship.}
\begin{document}

\begin{abstract}
We characterize the homotopically trivial non-contractible topological spaces with the minimum number of points.
\end{abstract}

\maketitle

\section{Introduction}

Finite topological spaces have attracted increasing attention in the last years, principally from works by J. Barmak and G. Minian \cite{barmak2007minimal,barmak2008one,barmak2008simple,barmak2012strong}. One of the main interests of the theory of finite spaces is that they serve as models for weak homotopy types of compact polyhedra. More precisely, for every compact polyhedron $K$ there exists a finite T$_0$--space $\X(K)$ together with a weak homotopy equivalence $K\to\X(K)$ \cite{mccord1966singular}. Moreover, there is a functorial correspondence between finite T$_0$--spaces and finite posets \cite{alexandroff1937diskrete} which endows the theory of finite topological spaces with a natural combinatorial flavour. This allows the study of compact polyhedra (and often of general topological spaces) by means of combinatorial tools and gives a new insight into relevant topological questions \cite{barmak2011algebraic}. 

A natural problem of this theory is to find finite topological spaces with the minimum number of points which have certain weak homotopy type. One of the first questions of this type was asked by J.P. May in \cite{may2003finite}, where he conjectures that, for all $n\in\N$, the $n$--fold non-Hausdorff suspension of the $0$--sphere, denoted $\Su^n S^0$, is a \emph{minimal finite model} of the $n$--sphere, that is, a finite topological space which is weak homotopy equivalent to the $n$--sphere with the minimum possible cardinality. This question was answered positively by Barmak and Minian in \cite{barmak2007minimal}. Moreover, they also prove that $\Su^n S^0$ is the only minimal finite model of the $n$--sphere. In the same article they give a characterization of the minimal finite models of the finite graphs.

Further minimality questions were formulated in \cite{hardie2002nontrivial} and in \cite{barmak2011algebraic} regarding finite models of the real projective plane and the torus. These problems were solved in \cite{cianci2015splitting}, where a characterization of all the minimal finite models of these spaces is given.

Also, in \cite{rival1976fixed} and in \cite{barmak2011algebraic}, a homotopically trivial non-contractible space of nine points is given. Thus, another natural question to pose in this theory is whether this is the minimum number of points that a homotopically trivial non-contractible space can have \cite[problem 3.5.4]{may2016finite}. In this article we give an affirmative answer to this question showing that a homotopically trivial non-contractible space must have at least nine points. Moreover, we find all the homotopically trivial non-contractible spaces of nine points.

\section{Preliminaries}

In this section we will recall the basic notions of the theory of finite topological spaces and fix notation. For a comprehensive exposition on finite spaces the reader may consult \cite{barmak2011algebraic}. 

If $X$ is a finite topological space and $x\in X$, the intersection of all the open subsets of $X$ which contain $x$ is clearly an open subset and is denoted by $U_x$. Any finite T$_0$--space $X$ can be endowed with a partial order which is defined as follows: $x_1\leq x_2$ if and only if $U_{x_1}\subseteq U_{x_2}$. This defines a correspondence between finite T$_0$--spaces and finite posets which was first observed by Alexandroff \cite{alexandroff1937diskrete}. Moreover, under this correspondence continuous maps between finite T$_0$--spaces correspond to order-preserving morphisms between the respective posets. Hereafter, any finite T$_0$--space will be regarded also as a poset without further notice.

Let $X$ be a finite T$_0$--space and let $x\in X$. From the definition of the associated partial order it follows that $U_x=\{a\in X \tq a\leq x\}$. In a similar way, the smallest closed set which contains $x$ is $\overline{\{x\}}=\{a\in X \tq a\geq x\}$ and is denoted by $F_x$. It is also standard to define $\widehat{U}_x=\{a\in X \tq a< x\}$, $\widehat{F}_x=\{a\in X \tq a> x\}$, $C_x=U_x\cup F_x$ and $\widehat{C}_x=C_x-\{x\}$. We say that the point $x\in X$ is an \emph{up beat point} (resp. \emph{down beat point}) of $X$ if the subposet $\widehat{F}_x$ has a minimum (resp. if the subposet $\widehat{U}_x$ has a maximum) \cite{stong1966finite,may2003finite,barmak2011algebraic}. 

Stong proves in \cite{stong1966finite} that if $x$ is a beat point of $X$ then $X-\{x\}$ is a strong deformation retract of $X$ and that two finite T$_0$--spaces are homotopy equivalent if and only if one obtains homeomorphic spaces after successively removing their beat points. Using the results of Stong it is easy to prove that a finite T$_0$--space which has a maximum or a minimum is contractible.

If $X$ is a finite T$_0$--space, $\K(X)$ will denote the \emph{order complex} of $X$, that is, the simplicial complex of the non-empty chains of $X$. Also, $X^\op$ will denote the poset $X$ with the inverse order and will be called the \emph{opposite} space of $X$.

McCord proves in \cite{mccord1966singular} that if $X$ is a finite T$_0$--space then there exists a weak homotopy equivalence from the geometric realization of $\K(X)$ to $X$. In particular, any finite T$_0$--space is weak homotopy equivalent to its opposite space since their order complexes coincide. Note also that the aforementioned result of McCord implies that the singular homology groups of a finite T$_0$--space $X$ are isomorphic to the simplicial homology groups of $\K(X)$.

The \emph{non-Hausdorff suspension} of a topological space $X$ is defined as the space $\Su X$ whose underlying set is $X\amalg \{+,-\}$ and whose open sets are those of $X$ together with $X\cup\{+\}$, $X\cup\{-\}$ and $X\cup\{+,-\}$ \cite{mccord1966singular}. Note that, if $X$ is a finite T$_0$--space then the partial order in $\Su X$ is induced by the partial order of $X$ together with the relations $x\leq +$ and $x\leq -$ for all $x\in X$. McCord proves that for every topological space $X$ there exists a weak homotopy equivalence between the suspension of $X$ and $\Su X$ \cite{mccord1966singular}. As an example, he shows that, for all $n\in\N$, the $n$--sphere $S^n$ is weak homotopy equivalent to the $n$--fold non-Hausdorff suspension of the $0$--sphere $S^0$. Observe that $\Su^n S^0$ is a finite T$_0$--space of $2n+2$ points.

May asked in \cite{may2003finite} if $\Su^n S^0$ was the smallest space which is weak homotopy equivalent to the $n$--sphere. This question was answered by Barmak and Minian in \cite{barmak2007minimal}. More precisely, they proved the following theorem from which the affirmative answer to May's question follows.

\begin{theo} \label{theo_height_and_cardinality}
Let $X \neq\ast$ be a finite topological space without beat points. Then $X$ has at least $2h(X)+2$ points.
Moreover, if $X$ has exactly $2h(X)+2$ points, then it is homeomorphic to $\Su^{h(X)} S^0$.
\end{theo}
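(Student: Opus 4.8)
The plan is to regard $X$ as a finite poset (a finite T$_0$--space) and to argue via the \emph{level function}: for $x\in X$ let $\ell(x)$ be the length of a longest chain of $X$ whose maximum is $x$, so that $\ell(x)\in\{0,1,\dots,h(X)\}$, and set $L_i=\{x\in X \tq \ell(x)=i\}$. I would dispose of $h(X)=0$ at once (then $X$ is an antichain, and since $X\neq\ast$ it has at least two points, which is $2h(X)+2$, with equality exactly when $X=S^0=\Su^0 S^0$). For $h(X)\geq 1$ the whole inequality becomes the single assertion that \emph{every level $L_i$ has at least two elements}: since a chain of maximal length $h(X)$ has its $h(X)+1$ elements lying in the $h(X)+1$ distinct levels, every $L_i$ is nonempty, so once singletons are excluded we get $|X|=\sum_{i=0}^{h(X)}|L_i|\geq 2(h(X)+1)=2h(X)+2$.

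The heart of the inequality is therefore the lemma that no level is a singleton. Suppose $L_i=\{z\}$ with $1\leq i\leq h(X)$. Choosing a chain realizing $\ell(z)=i$ produces an element $c<z$ with $\ell(c)=i-1$. Any $w>c$ satisfies $\ell(w)>i-1$, hence $\ell(w)\geq i$; if $\ell(w)=i$ then $w=z$, while if $\ell(w)>i$ a chain realizing $\ell(w)$ meets the unique level-$i$ point, forcing $z<w$. Thus every element of $\widehat{F}_c$ lies above $z$, so $z=\min\widehat{F}_c$ and $c$ is an up beat point, a contradiction. The case $i=0$ is handled dually (using $h(X)\geq 1$, so that $i\leq h(X)-1$): a singleton $L_0=\{z\}$ is a minimum, and an element $c>z$ with $\ell(c)=1$ has $\widehat{U}_c=\{z\}$, making $c$ a down beat point. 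Hence $|L_i|\geq 2$ for all $i$.

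For the equality statement, $|X|=2h(X)+2$ now forces $|L_i|=2$ for every $i$, say $L_i=\{a_i,b_i\}$; I would prove that the order is the \emph{complete layered} one, i.e. $x<y\iff\ell(x)<\ell(y)$, which is precisely $\Su^{h(X)}S^0$. Since $x<y$ always gives $\ell(x)<\ell(y)$, it suffices to show, by induction on $i$, that every element of $L_i$ lies below every element of $L_{i+1}$. In the inductive step consider $a_{i+1}$: as $\ell(a_{i+1})=i+1\geq 1$, the subposet $\widehat{U}_{a_{i+1}}$ has no maximum, hence at least two maximal elements (lower covers); one of them has level $i$, say $a_i<a_{i+1}$. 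The decisive point is to rule out a second lower cover $y$ at a level $s<i$: by the inductive hypothesis the order is already complete and layered below level $i$, so $y<a_i$, whence $y<a_i<a_{i+1}$ contradicts $y$ being a lower cover of $a_{i+1}$. Therefore the second lower cover is $b_i$, giving $a_i,b_i<a_{i+1}$, and symmetrically for $b_{i+1}$. I expect this betweenness argument --- using the already-established complete order on lower levels to forbid covers that ``skip'' a level --- to be the main obstacle, since the local no-beat-point conditions alone do not pin down the relations; once it is in place, transitivity yields $x<y\iff\ell(x)<\ell(y)$ and hence $X\cong\Su^{h(X)}S^0$.
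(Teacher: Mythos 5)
Your argument is correct, and note first that there is no in-paper proof to compare against: Theorem \ref{theo_height_and_cardinality} is quoted from Barmak and Minian without proof. The closest in-paper relative of the standard argument is Lemma \ref{lemma_card_Ua_Ub}: Barmak and Minian's route propagates cardinalities along a chain $x_0<x_1<\cdots<x_h$ realizing the height ($a>b$ forces $\widehat U_a\supsetneq U_b$, hence $\#\widehat U_a\geq\#\widehat U_b+2$, giving $\#U_{x_h}\geq 2h+1$, plus one further point since a space with a maximum and more than one point must have beat points), with the uniqueness statement extracted from tightness of these inequalities. Your level-set decomposition is a genuinely different organization of the same elementary ingredients: you concentrate the no-beat-point hypothesis into the single lemma that no level $L_i$ is a singleton --- a singleton $L_i$ with $i\geq 1$ makes the penultimate element of a realizing chain an up beat point, and a singleton $L_0$ makes any level-one element a down beat point --- after which the bound $\#X=\sum_i\#L_i\geq 2h(X)+2$ is immediate, and in the equality case the forced sizes $\#L_i=2$ feed an induction showing the order is completely layered, i.e.\ $X\cong\Su^{h(X)}S^0$. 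What your approach buys is a transparent rigidity step: the inductive hypothesis (complete layering below level $i$) is exactly what rules out a lower cover of $a_{i+1}$ at level $s<i$, whereas the cardinality-propagation proof must reconstruct the structure from equality cases. Two facts you use tacitly deserve their one-line justifications: (i) in a chain realizing $\ell(w)$ the $j$-th element has level exactly $j$ (otherwise one could splice a longer initial segment into the chain and contradict maximality of $\ell(w)$) --- this is what forces the unique level-$i$ point $z$ to lie below every $w>c$ with $\ell(w)>i$ in the singleton lemma, and what produces a maximal element of $\widehat U_{a_{i+1}}$ of level exactly $i$ in the induction (anything strictly between it and $a_{i+1}$ would have level strictly between $i$ and $i+1$); and (ii) a nonempty finite poset without a maximum has at least two maximal elements. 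With these inserted, your proof is complete, including the $h(X)=0$ case you dispatch at the outset.
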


In \cite{cianci2014homology} we studied the homology groups of finite T$_0$--spaces obtaining several results and applications. Among them we mention the following proposition, which will be needed later. In what follows, homology will always mean homology with integer coefficients. Thus, the group of coefficients will be omitted from the notation.

\begin{prop}\label{prop_relative_homology}
Let $X$ be a finite $T_0$--space and let $D$ be an antichain in $X$. Then $H_n(X,X-D)\cong\bigoplus\limits_{x\in D}\tilde{H}_{n-1}(\hat{C}_x)$ for every $n\in \Z$.
\end{prop}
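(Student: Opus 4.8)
The plan is to transfer the computation to the order complex $\K(X)$ and to exploit the fact that, since $D$ is an antichain, no chain of $X$ can contain two of its points.

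First I would reduce the relative homology of the pair of finite spaces $(X,X-D)$ to the relative simplicial homology of the pair of order complexes $(\K(X),\K(X-D))$. Since $X-D$ carries the subspace topology it is the subposet of $X$ supported on the points outside $D$, and $\K(X-D)$ is the full subcomplex of $\K(X)$ spanned by those vertices. McCord's weak homotopy equivalence $|\K(X)|\to X$ is natural and restricts to the corresponding equivalence $|\K(X-D)|\to X-D$, so it induces a morphism between the long exact sequences of the two pairs that is an isomorphism on every absolute term. The five lemma then yields $H_n(X,X-D)\cong H_n(\K(X),\K(X-D))$ for all $n$.

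Next I would analyse the relative simplicial chain complex $C_*(\K(X),\K(X-D))$, which is free on the simplices of $\K(X)$ not lying in $\K(X-D)$, that is, on the chains of $X$ that meet $D$. Here the antichain hypothesis is decisive: a chain is totally ordered while any two points of $D$ are incomparable, so each such chain contains \emph{exactly} one element of $D$. Hence the generators split according to the unique element of $D$ they contain, and since deleting that element from a chain produces a simplex lying in $\K(X-D)$ (and so zero in the relative complex) the boundary operator respects this splitting. Therefore $C_*(\K(X),\K(X-D))\cong\bigoplus_{x\in D}C_*^{(x)}$, where $C_*^{(x)}$ is generated by the chains containing $x$. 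Such a chain is $\{x\}\cup\tau$ with $\tau$ a chain in $\widehat{C}_x$, and its relative boundary keeps only the faces still containing $x$; thus $C_*^{(x)}$ is precisely the relative chain complex $C_*(\K(C_x),\K(\widehat{C}_x))$, giving $H_n(\K(X),\K(X-D))\cong\bigoplus_{x\in D}H_n(\K(C_x),\K(\widehat{C}_x))$.

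Finally I would identify each summand. Because every point of $\widehat{C}_x$ is comparable to $x$, every chain of $\widehat{C}_x$ extends by $x$ to a chain of $C_x$, so $\K(C_x)$ is the simplicial cone with apex $x$ over $\K(\widehat{C}_x)$ and is therefore contractible. The long exact sequence of the pair $(\K(C_x),\K(\widehat{C}_x))$ then collapses to an isomorphism $H_n(\K(C_x),\K(\widehat{C}_x))\cong\tilde{H}_{n-1}(\K(\widehat{C}_x))$, and McCord's theorem identifies the right-hand side with $\tilde{H}_{n-1}(\widehat{C}_x)$; assembling the three steps gives the claim. I expect the substantive point to be the antichain observation, which is exactly what permits the direct-sum decomposition; the remaining difficulty is bookkeeping, namely keeping the reduced-homology conventions consistent across all degrees so that the isolated-point case $\widehat{C}_x=\varnothing$ (where $\tilde{H}_{-1}(\varnothing)=\Z$ must survive) comes out correctly, and checking that the chain-level splitting is genuinely compatible with the boundary operator.
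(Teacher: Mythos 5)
Your proposal is correct, and every step checks out: McCord's map is natural with respect to inclusions of subposets, so the five lemma does give $H_n(X,X-D)\cong H_n(\K(X),\K(X-D))$; the antichain hypothesis guarantees each relative generator contains exactly one point of $D$ and that the face dropping that point dies in the quotient, so the chain-level splitting is genuinely compatible with the boundary; and the cone/link identification with the augmented convention $\tilde{H}_{-1}(\varnothing)\cong\Z$ handles the case $\widehat{C}_x=\varnothing$ correctly. Note that the paper itself states Proposition \ref{prop_relative_homology} without proof, citing \cite{cianci2014homology}, so there is no internal argument to compare against; your argument is a complete, self-contained proof along the standard lines (it is essentially the observation that $|\K(X)|-|\K(X-D)|$ is the disjoint union of the open stars of the vertices in $D$, carried out at the level of relative simplicial chains).
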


If $X$ is a finite T$_0$--space, $\mxl(X)$ and $\mnl(X)$ will denote the subsets of maximal and minimal points of $X$ respectively. The following remark states some simple facts concerning the maximal and minimal points of a finite T$_0$--space. The first two items already appeared in \cite{cianci2015splitting}.

\begin{rem} \label{rem_mxl_mnl}
\ 

\begin{enumerate}
\item Let $X$ be a connected and finite T$_0$--space with more than one point. Then $\mxl(X)\cap\mnl(X)=\varnothing$.
\item Let $X$ be a finite T$_0$--space without beat points. If $a\in X-\mxl(X)$ then $\#(\widehat F_a \cap \mxl(X) )\geq 2$. Similarly, if $b\in X-\mnl(X)$ then $\#(\widehat U_b \cap \mnl(X) )\geq 2$.
\item Let $X$ be a finite T$_0$--space without beat points. If $\#\mxl(X)=2$ then $X$ is homeomorphic to $\Su(X-\mxl(X))$.
\end{enumerate}
\end{rem}

Finally, if $X$ is a finite T$_0$--space, we define the \emph{height} of $X$ as 
\begin{displaymath}
h(X)=\max\{\#c-1\;/\; \textnormal{$c$ is a chain of $X$}\}. 
\end{displaymath}
Note that $h(X)=\dim \K(X)$.

\section{Results}

In this section we will prove that a homotopically trivial non-contractible space must have at least nine points. In addition, we will find all the homotopically trivial non-contractible spaces of nine points. 

In figure \ref{fig_ht_non-contractible} we exhibit a homotopically trivial non-contractible T$_0$--space of nine points, which was also considered in \cite[Figure 2]{rival1976fixed} and in \cite[Example 4.3.3]{barmak2011algebraic}. Observe that this space is not contractible since it does not have beat points and is homotopically trivial since the geometric realization of its order complex is contractible. 

\begin{figure}[h!]
\begin{tikzpicture}[x=3cm,y=3cm]
	\tikzstyle{every node}=[font=\footnotesize]
	
	\foreach \x in {1,...,3} \draw (0.5*\x,1) node(a\x){$\bullet$} node[above=1]{$a_{\x}$};
	\foreach \x in {1,...,3} \draw (0.5*\x,0.5) node(b\x){$\bullet$} node[right=1]{$b_{\x}$};
	\foreach \x in {1,...,3} \draw (0.5*\x,0) node(c\x){$\bullet$} node[below=1]{$c_{\x}$};
	
	\foreach \x in {1,2} \draw (a1)--(b\x);
	\foreach \x in {1,3} \draw (a2)--(b\x);
	\foreach \x in {2,3} \draw (a3)--(b\x);
		
	\foreach \x in {1,2} \draw (c1)--(b\x);
	\foreach \x in {1,2,3} \draw (c2)--(b\x);
	\foreach \x in {2,3} \draw (c3)--(b\x);
\end{tikzpicture}
\caption{Homotopically trivial non-contractible space of 9 points.}
\label{fig_ht_non-contractible}
\end{figure}
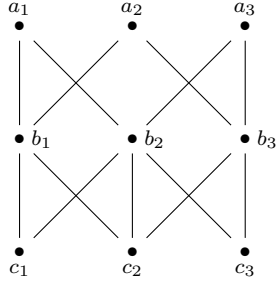

We will give now several lemmas which will be useful to prove the main results of this article.

\begin{lemma} \label{lemma_card_Ua_Ub}
Let $X$ be a finite T$_0$--space without beat points. Let $a,b\in X$ with $a>b$. Then $\#\widehat{U}_a\geq \#\widehat{U}_b+2$ and $\#\widehat{F}_b\geq \#\widehat{F}_a+2$.
\end{lemma}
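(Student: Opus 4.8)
The plan is to prove the first inequality $\#\widehat U_a\geq\#\widehat U_b+2$ directly from the absence of down beat points, and then to deduce the second inequality by passing to the opposite space. First I would record the inclusion furnished by transitivity: since $a>b$, every $x<b$ also satisfies $x<a$, whence $\widehat U_b\subseteq\widehat U_a$; moreover $b\in\widehat U_a$ whereas $b\notin\widehat U_b$. This immediately gives $\#\widehat U_a\geq\#\widehat U_b+1$, so the real content of the lemma is to exhibit \emph{one further} element of $\widehat U_a$ lying outside $\widehat U_b\cup\{b\}$.

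This is where the hypothesis enters. Since $X$ has no beat points, $a$ is in particular not a down beat point, so the subposet $\widehat U_a$ has no maximum. As $b\in\widehat U_a$, the point $b$ cannot be the maximum of $\widehat U_a$, and therefore there exists $c\in\widehat U_a$ with $c\not\leq b$. From $c\not\leq b$ it follows both that $c\neq b$ and that $c\notin\widehat U_b$ (any element of $\widehat U_b$ would be strictly below $b$, hence $\leq b$). Consequently $\widehat U_b\cup\{b,c\}\subseteq\widehat U_a$ with $b$ and $c$ two distinct points outside $\widehat U_b$, which yields $\#\widehat U_a\geq\#\widehat U_b+2$.

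For the inequality $\#\widehat F_b\geq\#\widehat F_a+2$ I would invoke duality rather than repeat the argument. Note that for every $x$ the set $\widehat F_x$ computed in $X$ coincides with $\widehat U_x$ computed in $X^\op$, that $X^\op$ again has no beat points (up beat points of $X$ are exactly the down beat points of $X^\op$, and conversely), and that the relation $a>b$ in $X$ reads $b>a$ in $X^\op$. Applying the first inequality inside $X^\op$ to the pair $b>a$ then gives precisely $\#\widehat F_b\geq\#\widehat F_a+2$.

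I expect the only delicate point---and the step where the no-beat-point assumption is indispensable---to be the production of the surplus element $c$: without that hypothesis one can only guarantee the trivial bound $\#\widehat U_a\geq\#\widehat U_b+1$ (for instance along a chain). The crux is thus the translation of ``$\widehat U_a$ has no maximum'' into ``$b$ is not the top of $\widehat U_a$'', which is transparent once phrased but carries the whole weight of the proof.
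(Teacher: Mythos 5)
Your proof is correct and is essentially the paper's argument: the paper phrases the key step as the strict inclusion $\widehat{U}_a \supsetneq U_b$ forced by $a$ not being a down beat point, which is exactly your explicit production of the surplus element $c \not\leq b$, and both proofs obtain the second inequality by the same passage to $X^{\op}$. No gaps.
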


\begin{proof}
Note that $\widehat U_a \supseteq U_b$ since $b<a$. And since $a$ is not a beat point of $X$ we obtain that $\widehat{U}_a \supsetneq U_b$. Therefore $\# \widehat U_a\geq \# U_b + 1=\#\widehat{U}_b+2$.

Applying this result to $X^\op$ we obtain that $\#\widehat{F}_b\geq \#\widehat{F}_a+2$.
\end{proof}

\begin{lemma} \label{lemma_subspace_with_non_trivial_H2}
Let $X$ be a finite T$_0$--space such that $h(X)=2$. If $X$ is homotopically trivial then $H_2(A)=0$ for all subspaces $A\subseteq X$.
\end{lemma}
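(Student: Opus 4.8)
The plan is to reduce the statement to an assertion about the chain groups of order complexes, where the hypothesis $h(X)=2$ trivializes matters. Throughout I would use McCord's theorem, recalled above, which gives $H_n(X)\cong H_n(\K(X))$ for every $n$, together with the observation that for a subspace $A\subseteq X$ the complex $\K(A)$ is a subcomplex of $\K(X)$ and $H_n(A)\cong H_n(\K(A))$.

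First I would exploit the height restriction. Since $\dim\K(X)=h(X)=2$, there are no simplices of dimension at least $3$, so $C_3(\K(X))=0$ and hence $\partial_3=0$. Consequently $H_2(\K(X))=\ker\partial_2=Z_2(\K(X))$, the group of simplicial $2$--cycles of $\K(X)$. Now homotopical triviality enters: as $X$ is homotopically trivial we have $\tilde{H}_*(X)=0$, in particular $H_2(X)=0$, and therefore $Z_2(\K(X))\cong H_2(\K(X))\cong H_2(X)=0$. The upshot of this step is the key intermediate fact that $\K(X)$ possesses no nonzero simplicial $2$--cycle.

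It then remains to transfer this to subspaces. Let $A\subseteq X$. Because $A$ is a subposet of $X$, the complex $\K(A)$ is a subcomplex of $\K(X)$; thus $C_2(\K(A))$ is a subgroup of $C_2(\K(X))$ and the boundary operator of $\K(A)$ is the restriction of that of $\K(X)$. Hence $Z_2(\K(A))=C_2(\K(A))\cap Z_2(\K(X))\subseteq Z_2(\K(X))=0$, so $Z_2(\K(A))=0$. Since $H_2(A)\cong H_2(\K(A))$ is a quotient of $Z_2(\K(A))$, we conclude that $H_2(A)=0$, as desired.

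The main point to get right, rather than a genuine obstacle, is that one should resist comparing $H_2(A)$ and $H_2(X)$ through the inclusion $A\hookrightarrow X$: a subspace of a homotopically trivial space need not itself be homotopically trivial, so no functorial comparison of homology is available. The argument instead bypasses homology functoriality entirely and works at the level of cycle groups, where the vanishing $Z_2(\K(X))=0$ -- which is exactly the place where the two hypotheses $\dim\K(X)=2$ and $H_2(X)=0$ combine -- is automatically inherited by every subcomplex.
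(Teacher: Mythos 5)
Your proof is correct, but it takes a genuinely different route from the paper's. The paper disposes of the lemma in one line via the long exact sequence of the pair $(X,A)$: since $h(X)=2$, the relative group $H_3(X,A)$ vanishes (it is the simplicial homology of the pair $(\K(X),\K(A))$, and $\K(X)$ has no simplices in degree $3$), and exactness of $H_3(X,A)\to H_2(A)\to H_2(X)$ together with $H_2(X)=0$ forces $H_2(A)=0$. You instead argue at the chain level: in a $2$--dimensional complex $B_2=\operatorname{im}\partial_3=0$, so $H_2(\K(X))=Z_2(\K(X))$, which vanishes by homotopical triviality, and the $2$--cycles of a subcomplex sit inside the $2$--cycles of the ambient complex, so every $\K(A)$ inherits $Z_2=0$. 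Both arguments rest on exactly the same two inputs (the dimension bound and $H_2(X)=0$); yours is more elementary and self-contained, using only McCord's absolute comparison $H_*(A)\cong H_*(\K(A))$ rather than its relative counterpart, and it isolates a clean general principle: vanishing of top-dimensional homology of a simplicial complex is inherited by all of its subcomplexes, because in top degree homology equals the cycle group. One small caveat: your closing remark that ``no functorial comparison of homology is available'' overstates the obstacle. The paper's proof does compare $H_2(A)$ with $H_2(X)$ through the inclusion; what fails is only the naive hope that $H_2(A)\to H_2(X)$ is injective for free, and the pair sequence repairs exactly this by identifying the kernel with an image of $H_3(X,A)=0$.
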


\begin{proof}
Let $A$ be a subspace of $X$. Note that $H_3(X,A)=0$ since $h(X)=2$. The result then follows from the exact sequence $H_3(X,A) \longrightarrow H_2(A) \longrightarrow H_2(X)$.
\end{proof}

\begin{lemma} \label{lemma_no_subspace_finite_S2}
Let $X$ be a homotopically trivial finite T$_0$--space such that $h(X)=2$. Let $b,b'\in X-(\mxl(X)\cup\mnl(X))$. If $\#(\widehat{F}_{b}\cap \widehat{F}_{b'})\geq 2$ then $\#(\widehat{U}_{b}\cap \widehat{U}_{b'})\leq 1$.
\end{lemma}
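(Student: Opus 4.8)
The plan is to argue by contradiction, reducing everything to Lemma~\ref{lemma_subspace_with_non_trivial_H2}. Suppose, for distinct points $b\neq b'$ (when $b=b'$ the conclusion can fail, as the point $b_2$ of Figure~\ref{fig_ht_non-contractible} shows, so I take $b$ and $b'$ distinct), that both $\#(\widehat{F}_b\cap\widehat{F}_{b'})\geq 2$ and $\#(\widehat{U}_b\cap\widehat{U}_{b'})\geq 2$ hold. I would then choose distinct points $a_1,a_2\in\widehat{F}_b\cap\widehat{F}_{b'}$ and distinct points $c_1,c_2\in\widehat{U}_b\cap\widehat{U}_{b'}$, set $A=\{c_1,c_2,b,b',a_1,a_2\}$ with the order induced from $X$, and aim to show $H_2(A)\neq 0$, contradicting Lemma~\ref{lemma_subspace_with_non_trivial_H2}.

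The heart of the argument is to identify the induced order on $A$, and here the hypothesis $h(X)=2$ does all the work. Since $b\notin\mxl(X)$ there is some $d>b$; so if some $c_i\in\widehat{U}_b$ were not minimal, a witness $c'<c_i$ would produce the chain $c'<c_i<b<d$ of four elements, forcing $h(X)\geq 3$. Hence each $c_i$ is minimal, and dually each $a_j$ is maximal; consequently $c_1,c_2$ are incomparable and $a_1,a_2$ are incomparable. The same length count rules out any relation between $b$ and $b'$: for instance $b<b'$ would yield the chain $c_1<b<b'<a_1$. Thus the six points are distinct, and the comparabilities of $A$ are exactly $c_i<b$, $c_i<b'$, $b<a_j$, $b'<a_j$, together with the forced transitive relations $c_i<a_j$.

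Finally I would recognize this poset as $\Su^2 S^0$, the minimal finite model of $S^2$: its comparability graph is the complete tripartite graph on the three incomparable pairs $\{c_1,c_2\}$, $\{b,b'\}$, $\{a_1,a_2\}$, so the order complex $\K(A)$ is the corresponding clique complex, i.e.\ the boundary of an octahedron. Since this triangulates $S^2$, McCord's theorem gives $H_2(A)\cong H_2(\K(A))\cong\Z\neq 0$, the desired contradiction. The step I expect to be the real obstacle is the structural one in the middle paragraph: one must use $h(X)=2$ not only to place the $c_i$ and $a_j$ at the extremes but, crucially, to force $b$ and $b'$ to be incomparable, since it is precisely the three incomparabilities (together with the transitive relations $c_i<a_j$) that make $\K(A)$ a genuine $2$--sphere rather than a contractible complex.
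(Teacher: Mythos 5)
Your proof is correct and takes essentially the same route as the paper's: both argue by contradiction, extract the six-point subspace $A=\{a_1,a_2,b,b',c_1,c_2\}$, identify it as $\Su^2 S^0$ using $h(X)=2$, and contradict Lemma~\ref{lemma_subspace_with_non_trivial_H2}; you merely spell out the chain-length verifications (maximality of the $a_j$, minimality of the $c_i$, incomparability of $b$ and $b'$) that the paper leaves implicit. Your side remark that the statement needs $b\neq b'$ is a fair catch --- the point $b_2$ of figure~\ref{fig_ht_non-contractible} does falsify the literal statement when $b=b'$, and the paper's proof tacitly assumes distinctness as well.
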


\begin{proof}
Note that $\{b,b'\}$ must be an antichain. Suppose that $\#(\widehat{U}_{b}\cap \widehat{U}_{b'})\geq 2$. Then there exist distinct elements $a,a'\in \widehat{F}_{b}\cap \widehat{F}_{b'}$ and distinct elements $c,c'\in \widehat{U}_{b}\cap \widehat{U}_{b'}$. Note that $\{a,a'\}\subseteq \mxl(X)$ and that $\{c,c'\}\subseteq \mnl(X)$. Hence the subspace $A=\{a,a',b,b',c,c'\}$ is homeomorphic to $\Su^2 S^0$ and then $H_2(A)\neq 0$, contradicting \ref{lemma_subspace_with_non_trivial_H2}. Thus $\#(\widehat{U}_{b}\cap \widehat{U}_{b'})\leq 1$.
\end{proof}

\begin{lemma} \label{lemma_homotopically_trivial_suspension}
Let $X$ be a topological space such that, for some $x_0\in X$, $\pi_1(X,x_0)$ is not a non-trivial perfect group. If $\Su X$ is homotopically trivial then $X$ is homotopically trivial.
\end{lemma}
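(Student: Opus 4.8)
The plan is to translate the hypothesis that $\Su X$ is homotopically trivial into homological information about $X$ itself, using McCord's weak equivalence and the suspension isomorphism, and then to upgrade that homological information to homotopical triviality by means of the Hurewicz theorem, with the hypothesis on $\pi_1(X,x_0)$ providing exactly the input needed to control the fundamental group.

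First I would pass from the non-Hausdorff suspension to the ordinary suspension $\Sigma X$: by McCord's theorem there is a weak homotopy equivalence between $\Sigma X$ and $\Su X$, so $\Su X$ is homotopically trivial if and only if $\Sigma X$ is. (Incidentally this forces $X$ to be nonempty, since otherwise $\Su X$ would be the discrete two-point space $S^0$, which is not homotopically trivial.) Being homotopically trivial, $\Sigma X$ is in particular acyclic, so $\tilde{H}_n(\Sigma X)=0$ for all $n$. Applying the suspension isomorphism $\tilde{H}_n(\Sigma X)\cong \tilde{H}_{n-1}(X)$ then gives $\tilde{H}_k(X)=0$ for every $k\geq 0$. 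In particular $\tilde{H}_0(X)=0$, so $X$ is path-connected, and $X$ is acyclic.

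Next I would analyze the fundamental group. Since $X$ is path-connected, the Hurewicz theorem in degree one yields $H_1(X)\cong \pi_1(X,x_0)^{\mathrm{ab}}$. As $H_1(X)=0$, the group $\pi_1(X,x_0)$ equals its own commutator subgroup, i.e.\ it is perfect. The hypothesis states that $\pi_1(X,x_0)$ is \emph{not} a non-trivial perfect group; combined with perfectness, this forces $\pi_1(X,x_0)$ to be trivial, so $X$ is simply connected. With $X$ simply connected and acyclic, a standard inductive application of the Hurewicz theorem concludes the argument: if $X$ is $(n-1)$-connected for some $n\geq 2$, then $\pi_n(X)\cong H_n(X)=0$, hence $X$ is $n$-connected, and by induction $\pi_n(X)=0$ for all $n$. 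Therefore $X$ is homotopically trivial.

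I expect the only delicate point to be the role of the perfect-group hypothesis, which is precisely what prevents the argument from failing. Without it, an acyclic space with non-trivial (and necessarily perfect) fundamental group would have a homotopically trivial suspension without itself being homotopically trivial; the hypothesis rules out exactly this situation and is what licenses the deduction $\pi_1(X,x_0)=0$ from $H_1(X)=0$. The remaining steps are routine once the acyclicity of $X$ has been extracted.
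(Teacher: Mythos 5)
Your proposal is correct and follows essentially the same route as the paper's proof: pass to the ordinary suspension via McCord's weak equivalence, use the suspension isomorphism to get that $X$ is path-connected and acyclic, deduce that $\pi_1(X,x_0)$ is perfect and hence trivial by hypothesis, and conclude with Hurewicz. Your write-up merely makes explicit the steps (suspension isomorphism, the inductive Hurewicz argument) that the paper leaves implicit.
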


\begin{proof}
Let $\Sigma X$ be the suspension of $X$. Since $\Sigma X$ is weak homotopy equivalent to $\Su X$ we obtain that $\Sigma X$ is homotopically trivial. Thus $H_n(\Sigma X)=0$ for all $n\in\N$. Hence $X$ is path-connected and $H_n(X)=0$ for all $n\in\N$. Thus $\pi_1(X,x_0)$ is a perfect group. Therefore $\pi_1(X,x_0)$ must be the trivial group. The result then follows from Hurewicz's theorem.
\end{proof}

The following proposition shows that the height of a homotopically trivial non-contractible finite T$_0$--space must be greater than 1.

\begin{prop} \label{prop_ht_of_height_1}
Let $X$ be a homotopically trivial finite T$_0$--space with $h(X)\leq 1$. Then $X$ is contractible.
\end{prop}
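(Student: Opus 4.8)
The plan is to argue by induction on the number of points of $X$, using beat-point reduction. If $\#X=1$ there is nothing to prove, so I would assume $\#X>1$. Since $X$ is homotopically trivial it is path-connected, and because $h(X)\leq 1$ its order complex $\K(X)$ is at most one-dimensional, i.e.\ a finite graph. By McCord's theorem $\K(X)$ is weak homotopy equivalent to $X$, so $\K(X)$ is a connected, weakly contractible graph; a connected one-dimensional CW complex with trivial fundamental group has vanishing first Betti number, so $\K(X)$ is a tree.

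The key combinatorial observation is that leaves of this tree are beat points of $X$. A finite tree with more than one vertex has a leaf, i.e.\ a vertex of degree one, which translated back to $X$ yields a point $x$ comparable to exactly one other point $y$ of $X$. Since $h(X)\leq 1$, every point of $X$ is maximal or minimal, and since $X$ is connected with more than one point, $\mxl(X)\cap\mnl(X)=\varnothing$ by Remark \ref{rem_mxl_mnl}; thus $x$ lies in exactly one of $\mxl(X)$, $\mnl(X)$. If $x\in\mnl(X)$ then $\widehat{F}_x=\{y\}$ has a minimum, so $x$ is an up beat point; if $x\in\mxl(X)$ then $\widehat{U}_x=\{y\}$ has a maximum, so $x$ is a down beat point. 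In either case $x$ is a beat point of $X$.

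By Stong's theorem $X-\{x\}$ is a strong deformation retract of $X$; in particular it is homotopy equivalent to $X$, hence homotopically trivial, and clearly $h(X-\{x\})\leq 1$. By the inductive hypothesis $X-\{x\}$ is contractible, and therefore so is $X$. The crux of the argument, and the place where the hypothesis $h(X)\leq 1$ is essential, is the passage from weak contractibility to the existence of a beat point: in dimension one a weakly contractible order complex is forced to be a tree, whose leaves are automatically beat points, so the reduction never stalls. In higher dimensions this implication fails, which is precisely why homotopically trivial non-contractible spaces can exist at all.
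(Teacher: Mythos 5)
Your proof is correct, but it follows a genuinely different route from the paper's. The paper gives a non-inductive counting argument: it passes to the core (no beat points), notes via Remark \ref{rem_mxl_mnl} that every vertex of the Hasse diagram must then lie on at least two edges, so the number of edges satisfies $\#E\geq\#X$, and derives the contradiction $1=\chi(X)=\#X-\#E\leq 0$. You instead induct on $\#X$: homotopical triviality forces $\K(X)$ to be a tree, a leaf of that tree is a point comparable to exactly one other point, and --- since every point is maximal or minimal when $h(X)\leq 1$, and $\mxl(X)\cap\mnl(X)=\varnothing$ for a connected space with more than one point --- such a leaf is a beat point, which Stong's theorem lets you remove without changing the homotopy type. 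The combinatorial core is the same fact seen from opposite sides: your observation that leaves are beat points is precisely the contrapositive, in height at most $1$, of the degree bound the paper extracts from item (2) of Remark \ref{rem_mxl_mnl} (no beat points implies degree at least $2$). As for what each approach buys: the paper's version is shorter and matches the Euler-characteristic counting style reused in the main theorems (where the same $\chi$-estimates reappear with $2$-chains included), while your induction is constructive --- it exhibits an explicit sequence of beat-point removals collapsing $X$ to a point --- and it isolates exactly where the hypothesis $h(X)\leq 1$ enters, namely that a weakly contractible graph is a tree, so the reduction can never stall; as you note, this is the implication that fails in higher dimensions.
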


\begin{proof}
Suppose that $X$ is not contractible. Without loss of generality, we may assume that $X$ does not have beat points. Let $E$ denote the set of edges of the Hasse diagram of $X$. Let $\Rel\subseteq X\times E$ be the relation defined by $x\Rel a$ if and only if the point $x$ belongs to the edge $a$.
Note that $\#\Rel=2\#E$. And since $X$ is a path-connected space, $\#X\geq 2$ and $X$ does not have beat points, from \ref{rem_mxl_mnl} we obtain that $\#\Rel\geq 2\#X$. Thus $\#E\geq \#X$. But $1=\chi(X)=\# X - \#E \leq 0$ which entails a contradiction.
\end{proof}

We will prove now one of the main results of this article.

\begin{theo} \label{theo_ht_non_contractible_1}
Let $X$ be a homotopically trivial non-contractible topological space. Then $\#X \geq 9$.
\end{theo}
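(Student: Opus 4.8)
The plan is to reduce the theorem to finite $T_0$--spaces and then argue by cardinality bounds driven by the height.

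The plan is to reduce to a finite $T_0$--space without beat points and then argue by cases on the height. Since any space with fewer than nine points is finite, I would first assume $\#X\leq 8$ and seek a contradiction. Passing to the Kolmogorov quotient (identifying topologically indistinguishable points) produces a finite $T_0$--space homotopy equivalent to $X$ with no more points, so I may assume $X$ is $T_0$; removing beat points one at a time (Stong) I may further assume $X$ has no beat points, is homotopically trivial, non-contractible, and has $\#X\leq 8$. In particular $X$ is connected with more than one point and has neither maximum nor minimum, so $h(X)\geq 1$, and by \ref{prop_ht_of_height_1} in fact $h(X)\geq 2$.

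The high-dimensional cases fall out of \ref{theo_height_and_cardinality}. If $h(X)\geq 4$ then $\#X\geq 2h(X)+2\geq 10$. If $h(X)=3$ then $\#X\geq 8$, and equality would force $X\cong \Su^3 S^0$, which is weak homotopy equivalent to $S^3$ and hence not homotopically trivial; so $\#X\geq 9$. Both possibilities contradict $\#X\leq 8$. Thus the remaining, and genuinely delicate, case is $h(X)=2$.

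Assume now $h(X)=2$. Then the space is the union of the three antichains $\mnl(X)$, the middle layer $X-(\mxl(X)\cup\mnl(X))$, and $\mxl(X)$, and every middle point lies strictly above some minimal point and strictly below some maximal point. A space with a single maximal point has a maximum and is contractible, so $\#\mxl(X)\geq 2$; and if $\#\mxl(X)=2$ then $X\cong\Su Y$ with $Y=X-\mxl(X)$ of height $\leq1$ by \ref{rem_mxl_mnl}. Since $\K(Y)$ is a graph, $\pi_1(Y)$ is free and hence not a non-trivial perfect group, so \ref{lemma_homotopically_trivial_suspension} gives that $Y$ is homotopically trivial, whence $Y$ is contractible by \ref{prop_ht_of_height_1}; but then either $Y$ has a beat point, which would also be a beat point of $X=\Su Y$, or $Y=\ast$ and $X=\Su\ast$ has a beat point, contradicting that $X$ has none. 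Therefore $\#\mxl(X)\geq 3$, and by the dual argument applied to $X^{\op}$, also $\#\mnl(X)\geq 3$. Since $h(X)=2$ forces the middle layer to be non-empty, $\#X\geq 7$, and $\#X\leq 8$ leaves only the profiles $(\#\mnl(X),\#\textnormal{mid},\#\mxl(X))$ equal to $(3,1,3)$, $(3,2,3)$, $(3,1,4)$ and $(4,1,3)$.

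The heart of the argument, and the step I expect to be the main obstacle, is to eliminate these four configurations. The structural input is \ref{rem_mxl_mnl}: every non-maximal point lies below at least two maximal points and every non-minimal point lies above at least two minimal points, which already forces many comparabilities. On top of this I would use \ref{lemma_subspace_with_non_trivial_H2} and \ref{lemma_no_subspace_finite_S2} to forbid any subspace homeomorphic to $\Su^2 S^0$ (two middle points sharing two maximal points above and two minimal points below), together with the constraint $\chi(X)=1$ computed on $\K(X)$, whose triangles are exactly the $3$--chains through the middle layer. A cautionary point is that the Euler characteristic alone is consistent with all four profiles, so it must be combined with the forbidden-subspace conditions and, where needed, a direct computation of $H_1$ or $H_2$ of $\K(X)$ (or of a relative group via \ref{prop_relative_homology}) to show that each such $X$ is either contractible, has a beat point, or has non-trivial homology, each outcome contradicting the hypotheses. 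Carrying out this finite but intricate case analysis yields $\#X\geq 9$, completing the proof.
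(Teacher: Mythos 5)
Your reduction steps are correct and coincide with the paper's: passing to a finite T$_0$--space without beat points, disposing of $h(X)\geq 4$ and $h(X)=3$ via Theorem \ref{theo_height_and_cardinality} (with the $\Su^3 S^0$ rigidity for the $8$--point case), ruling out $h(X)\leq 1$ via Proposition \ref{prop_ht_of_height_1}, and showing $\#\mxl(X)\geq 3$ and $\#\mnl(X)\geq 3$ through the non-Hausdorff suspension argument and Lemma \ref{lemma_homotopically_trivial_suspension} (your explicit justification that a beat point of $Y$ is a beat point of $\Su Y$ is a correct filling-in of a step the paper leaves implicit). But the proof stops exactly where the theorem's real content begins: the elimination of the remaining $h(X)=2$ configurations is asserted (``carrying out this finite but intricate case analysis yields $\#X\geq 9$'') rather than carried out. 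That case analysis is not routine bookkeeping; it is the heart of the paper's proof, and your sketch of how it would go is not the argument that actually works.

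Concretely, the paper eliminates these configurations by a quantitative Euler characteristic argument whose engine is Lemma \ref{lemma_card_Ua_Ub} --- which your proposal never invokes for the height-$2$ case. Writing $\B$ for the middle layer: when $\#\B=1$ with $\B=\{b\}$, $\alpha_b=\#\widehat{F}_b$, $\beta_b=\#\widehat{U}_b$, Lemma \ref{lemma_card_Ua_Ub} gives $\#\widehat{U}_a\geq\beta_b+2$ for each $a\in\widehat{F}_b$, so the number $l$ of $1$--chains satisfies $l\geq \alpha_b\beta_b+8$, while the number of $2$--chains is exactly $\alpha_b\beta_b$; hence $1=\chi(X)\leq \#X-8$, i.e.\ $\#X\geq 9$ with no case split on $\#\mxl(X)$ at all. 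When $\#\B=2$ (so $\#\mxl(X)=\#\mnl(X)=3$), one first uses Lemma \ref{lemma_no_subspace_finite_S2} together with the no-beat-point condition to force $\#\widehat{U}_b=\#\widehat{F}_b=2$ for both middle points, then proves $\#\widehat{U}_{a_1}+\#\widehat{U}_{a_2}+\#\widehat{U}_{a_3}\geq 12$ by splitting on whether $b_2<a_3$, obtaining $l\geq 16$ against $8$ two-chains and $\chi(X)\leq 0$, a contradiction. So your remark that ``the Euler characteristic alone is consistent with all four profiles'' misdiagnoses the situation: $\chi(X)=1$ combined with the lower bounds on $l$ coming from Lemma \ref{lemma_card_Ua_Ub}, Remark \ref{rem_mxl_mnl} and Lemma \ref{lemma_no_subspace_finite_S2} is exactly what kills every profile, and no direct computation of $H_1$ or $H_2$ of $\K(X)$ beyond Lemma \ref{lemma_no_subspace_finite_S2} is needed (such computations appear only in the paper's subsequent classification of the $9$--point examples). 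As it stands, your proposal is a correct framing with the decisive step missing.
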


\begin{proof}
By the works of McCord \cite{mccord1966singular} and Stong \cite{stong1966finite} we may assume that $X$ is a finite T$_0$--space without beat points.

By \ref{theo_height_and_cardinality}, if $h(X)\geq 4$ then $\#X\geq 10$. And by \ref{prop_ht_of_height_1}, $h(X)\geq 2$. Thus $h(X)=2$ or $h(X)=3$.

\underline{Case 1}: $h(X)=3$. By \ref{theo_height_and_cardinality}, $\#X\geq 8$. If $\#X=8$ then, again by \ref{theo_height_and_cardinality}, $X$ is homeomorphic to $\Su^{3} S^0$ which is weak equivalent to $S^3$. But this is a contradiction since $X$ is homotopically trivial. Thus $\#X\geq 9$.

\underline{Case 2}: $h(X)=2$. If $\#\mxl(X)=2$ then, from \ref{rem_mxl_mnl}, we obtain that $X=\Su Y$ with $Y=X-\mxl(X)$ and $h(Y)=1$. Thus $\pi_1(Y,y_0)$ is a free group for all $y_0\in Y$. By \ref{lemma_homotopically_trivial_suspension}, $Y$ is homotopically trivial. Since $X$ does not have beat points, the same holds for $Y$. But this is not possible by \ref{prop_ht_of_height_1}. Thus $\#\mxl(X)\geq 3$.

Working with $X^\op$ in a similar way, we obtain that $\#\mnl(X)\geq 3$. Let $\B=X-(\mxl(X)\cup\mnl(X))$. Since $h(X)=2$ we obtain that $\B$ is a non-empty antichain of $X$. If $\#\B\geq 3$ then $\# X \geq 9$. Thus we may assume that $\#\B\leq 2$.

\underline{Case 2.1}: $\#\B = 1$. Suppose that $\B=\{b\}$. Note that $\widehat{F}_b\subseteq \mxl(X)$ and $\widehat{U}_b\subseteq \mnl(X)$. Let $\alpha_b=\# \widehat{F}_b$ and $\beta_b=\#\widehat{U}_b$. By \ref{rem_mxl_mnl}, $\alpha_b \geq 2$ and $\beta_b\geq 2$.

Since $X$ is path-connected, from the first item of \ref{rem_mxl_mnl} we obtain that $\mxl(X)\cap\mnl(X)=\varnothing$ and thus $\# \widehat{U}_a\geq 2$ for all $a\in\mxl(X)$ by the second item of \ref{rem_mxl_mnl}. From \ref{lemma_card_Ua_Ub} we obtain that $\# \widehat{U}_a\geq \beta_b + 2$ for all $a\in\widehat{F}_b$.

Let $l$ denote the number of $1$--chains of $X$ and let $m=\#\mxl(X)$. We have that
\begin{displaymath}
l=\# \widehat{U}_b + \!\! \sum_{a\in \mxl(X)}\!\!\!\!\!\# \widehat{U}_a \geq \beta_b + \alpha_b(\beta_b+2)+ 2(m-\alpha_b) = \beta_b + \alpha_b\beta_b + 2m\geq \alpha_b\beta_b + 8.
\end{displaymath}
On the other hand, note that the number of $2$--chains of $X$ is $\alpha_b\beta_b$.

Hence
\begin{displaymath}
1=\chi(X)=\#X-l+\alpha_b\beta_b \leq \#X - (\alpha_b\beta_b + 8) + \alpha_b\beta_b = \#X -8 \ .
\end{displaymath}
Therefore $\#X\geq 9$.

\underline{Case 2.2}: $\#\B = 2$. If $\#\mxl(X)\geq 4$ or $\#\mnl(X)\geq 4$ then $\#X\geq 9$. Thus we may assume that $\#\mxl(X)=\#\mnl(X)=3$. As above, note that $\widehat{F}_b\subseteq \mxl(X)$ and $\widehat{U}_b\subseteq \mnl(X)$ for all $b\in \B$ since $\B$ is an antichain.

First, we will prove that $\# \widehat{U}_b \neq 3$ for all $b\in \B$. Let $b\in \B$ and suppose that $\# \widehat{U}_b = 3$. Then $\widehat{U}_b=\mnl(X)$. Let $b'$ be the element of $\B-\{b\}$. We claim that $\widehat{F}_b\subseteq \widehat{F}_{b'}$. Indeed, let $a\in \widehat{F}_b$. Then $\widehat{U}_a \supseteq U_b$ and since $a$ is not a beat point of $X$ we obtain that $\widehat{U}_a \supsetneq U_b$. But since $U_b \supseteq \mnl(X)$ and $a\in\mxl(X)$ it follows that $b'\in \widehat{U}_a$ and hence $a\in \widehat{F}_{b'}$.

By \ref{rem_mxl_mnl}, $\# \widehat{F}_b \geq 2$ and $\# \widehat{U}_{b'} \geq 2$. Thus $\#(\widehat{F}_b\cap \widehat{F}_{b'})\geq 2$ and $\#(\widehat{U}_b\cap \widehat{U}_{b'})\geq 2$ contradicting \ref{lemma_no_subspace_finite_S2}. Therefore $\# \widehat{U}_b \neq 3$ for all $b\in \B$.

Hence $\# \widehat{U}_b = 2$ for all $b\in \B$ by \ref{rem_mxl_mnl}. In a similar way, we obtain that $\# \widehat{F}_b = 2$ for all $b\in \B$.

Let $\mxl(X)=\{a_1,a_2,a_3\}$, $\B=\{b_1,b_2\}$ and $\mnl(X)=\{c_1,c_2,c_3\}$. Without loss of generality, we may assume that $\widehat{F}_{b_1}=\{a_1,a_2\}$ and $\widehat{U}_{b_1}=\{c_1,c_2\}$. We will prove that $\#\widehat{U}_{a_1}+\#\widehat{U}_{a_2}+\#\widehat{U}_{a_3}\geq 12$ analizing two cases: $b_2<a_3$ and $b_2\nless a_3$.

If $b_2<a_3$ then $\#\widehat{U}_{a_3}\geq\#\widehat{U}_{b_2}+2= 4$ by \ref{lemma_card_Ua_Ub}. And since $b_1<a_1$ and $b_1<a_2$, we also obtain that $\#\widehat{U}_{a_1}\geq 4$ and $\#\widehat{U}_{a_2}\geq 4$ by \ref{lemma_card_Ua_Ub}. Thus $\#\widehat{U}_{a_1}+\#\widehat{U}_{a_2}+\#\widehat{U}_{a_3}\geq 12$.

If $b_2\nless a_3$ then $\widehat{F}_{b_2}=\{a_1,a_2\}=\widehat{F}_{b_1}$. By \ref{lemma_no_subspace_finite_S2}, $\widehat{U}_{b_2}\neq \widehat{U}_{b_1}=\{c_1,c_2\}$. Hence $c_3<b_2$. Thus $\widehat{U}_{a_1}=\widehat{U}_{a_2}=\{b_1,b_2,c_1,c_2,c_3\}$ and since $\#\widehat{U}_{a_3}\geq 2$ by \ref{rem_mxl_mnl} we obtain that $\#\widehat{U}_{a_1}+\#\widehat{U}_{a_2}+\#\widehat{U}_{a_3}\geq 12$.

Therefore $\#\widehat{U}_{a_1}+\#\widehat{U}_{a_2}+\#\widehat{U}_{a_3}\geq 12$ in any case. As above, let $l$ denote the number of $1$--chains of $X$. Then, applying \ref{rem_mxl_mnl}, we get
\begin{displaymath}
l=\#\widehat{U}_{a_1}+\#\widehat{U}_{a_2}+\#\widehat{U}_{a_3}+\#\widehat{U}_{b_1}+\#\widehat{U}_{b_2}\geq 12 + 2 + 2 = 16.
\end{displaymath}
Now, note that the number of $2$--chains of $X$ is $\#\widehat{F}_{b_1}\#\widehat{U}_{b_1}+\#\widehat{F}_{b_2}\#\widehat{U}_{b_2}=8$. Thus
\begin{displaymath}
\chi(X)= \#X-l+8 \leq 8 - 16 + 8 = 0
\end{displaymath}
and hence the space $X$ is not homotopically trivial.
\end{proof}

As a corollary of the previous theorem we obtain that the space of figure \ref{fig_ht_non-contractible} is a homotopically trivial non-contractible space with the minimum possible number of points. In the following theorem we find all the homotopically trivial non-contractible spaces of this minimum number of points.

\begin{theo}
Let $X$ be a homotopically trivial non-contractible topological space such that $\#X = 9$. Then $X$ is homeomorphic to either the space of figure \ref{fig_ht_non-contractible} or its opposite.
\end{theo}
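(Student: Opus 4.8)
The plan is to rerun the case analysis of the proof of Theorem~\ref{theo_ht_non_contractible_1} under the additional hypothesis $\#X=9$, extracting from each inequality the structure forced by equality, and then to classify the single surviving configuration by a direct computation. As before, by McCord and Stong we may assume that $X$ is a finite T$_0$--space without beat points, so that $\chi(X)=1$; recall that the earlier proof yields $h(X)\in\{2,3\}$, and that when $h(X)=2$ one has $\#\mxl(X)\geq 3$, $\#\mnl(X)\geq 3$ and $\B=X-(\mxl(X)\cup\mnl(X))$ is a non-empty antichain. For the height-$2$ case, weak contractibility is equivalent to contractibility of the $2$--complex $\K(X)$. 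Since $\#X=9$, the admissible level distributions $(\#\mxl(X),\#\B,\#\mnl(X))$ are severely limited, and the goal is to eliminate all of them except $(3,3,3)$.

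First I would dispose of $h(X)=3$. Chaining Lemma~\ref{lemma_card_Ua_Ub} along a maximal chain of four elements produces a maximal point $a$ with $\#\widehat U_a\geq 6$, and a dual bound at a minimal point; since $\widehat U_a$ is disjoint from $\mxl(X)$ this forces $\#\mxl(X)\leq 3$ and, dually, $\#\mnl(X)\leq 3$. If $\#\mxl(X)=2$ then $X=\Su(X-\mxl(X))$ by Remark~\ref{rem_mxl_mnl}, and Lemma~\ref{lemma_homotopically_trivial_suspension} would exhibit a homotopically trivial non-contractible space on fewer than nine points, contradicting Theorem~\ref{theo_ht_non_contractible_1}; the extremal subcase $\#\mxl(X)=\#\mnl(X)=3$ forces every non-maximal point below $a$, which over-determines $X$ and yields $H_3(X)\neq 0$. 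Turning to $h(X)=2$, I would rule out $\#\B=1$ as follows: writing $b$ for the unique middle point, the Euler estimate of the earlier Case~2.1 gives $\chi(X)\leq\#X-8=1$ with equality forcing $\#\mxl(X)=3$ and $\#\mnl(X)=5$, whereas the \emph{dual} estimate, counting instead via the sets $\widehat F_c$ for $c\in\mnl(X)$, forces $\#\mxl(X)=5$ and $\#\mnl(X)=3$; these are incompatible, so $X$ cannot be homotopically trivial. For $\#\B=2$ the admissible distributions are $(4,2,3)$ and its opposite; here a refinement of the Euler estimate, using Lemma~\ref{lemma_no_subspace_finite_S2} to bound the number of $2$--chains in terms of the incidences of the two middle points, yields $\chi(X)\leq 0$, again contradicting $\chi(X)=1$. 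This leaves only $\#\B=3$, that is, the distribution $(3,3,3)$.

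In this remaining case I would analyse the two bipartite incidence graphs between $\B$ and $\mxl(X)$ and between $\B$ and $\mnl(X)$. Remark~\ref{rem_mxl_mnl} forces every middle point to lie below at least two maximal points and above at least two minimal points, so each up- and down-degree lies in $\{2,3\}$; the absence of beat points at the maximal and minimal points, together with Lemma~\ref{lemma_no_subspace_finite_S2} (which forbids two middle points from sharing two maximal points above and two minimal points below), restricts the pair of incidence graphs to finitely many types modulo the $\mathrm{Sym}(3)\times\mathrm{Sym}(3)$ relabelling of $\mxl(X)$ and $\mnl(X)$. For each surviving type I would compute the homology of $\K(X)$, using Lemma~\ref{lemma_subspace_with_non_trivial_H2} and Proposition~\ref{prop_relative_homology} to detect a non-trivial $H_1$ or $H_2$, and discard every type that is not acyclic. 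The only ones that remain are the space of Figure~\ref{fig_ht_non-contractible} and its opposite.

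I expect the main obstacle to be precisely this last enumeration: the Euler characteristic does not separate the contractible configuration from nearby non-contractible ones, since several incidence patterns satisfy $\chi(X)=1$ without being homotopically trivial, so they must be discarded by an honest homological computation rather than by a counting shortcut. A secondary technical point is the verification, in the height-$3$ reduction, that the hypothesis of Lemma~\ref{lemma_homotopically_trivial_suspension} holds, i.e. that the fundamental group of the seven-point base cannot be a non-trivial perfect group.
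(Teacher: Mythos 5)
Your reduction to $h(X)\in\{2,3\}$, your disposal of $\#\B=1$, and your flagging of the seven-point free-fundamental-group input are all sound; in fact your primal/dual equality-forcing argument for $\#\B=1$ (the estimate $\chi(X)\le 9-\beta-2\#\mxl(X)$ forces $\beta=2$, $\#\mxl(X)=3$, hence $\#\mnl(X)=5$, while the dual estimate forces $\#\mxl(X)=5$) is correct and is a genuinely different, slicker route than the paper's direct computation of $\chi(X)\le -1$ in that case. The genuine gap is your elimination of $h(X)=3$. After your bounds force $\#\mxl(X)=\#\mnl(X)=3$, so $\#\B=3$ with $\B$ containing a comparable pair, you dismiss the configuration by asserting that it ``over-determines $X$ and yields $H_3(X)\neq 0$.'' No argument is given, and the claimed mechanism is almost certainly wrong: chaining Lemma~\ref{lemma_card_Ua_Ub} along a maximal $4$--chain does pin down $\widehat{U}_a=X-\mxl(X)$ for the top point $a$, but this produces very few $3$--simplices in $\K(X)$ and no $3$--cycle. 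The paper's actual obstruction lives in $H_2$, and extracting it takes real work: it first rules out $\B$ being a $3$--chain via Lemma~\ref{lemma_card_Ua_Ub}, then, when $b_1<b_2$ with $b_3$ incomparable to $b_1$, derives $\widehat{F}_{b_1}=\{b_2\}\cup\mxl(X)$ and $\#\widehat{F}_{b_2}=2$, shows $\widehat{F}_{b_3}$ is not path-connected, shows two minimal points satisfy $\widehat{F}_{c_i}=X-\mnl(X)$, and only then applies Proposition~\ref{prop_relative_homology} twice to conclude $H_2(X)\cong H_1(\widehat{F}_{c_1})\oplus H_1(\widehat{F}_{c_3})\neq 0$. (The paper never isolates ``$h(X)=3$'' as a case at all: it organizes by $\#\B$, and the height-$3$ possibilities are absorbed by this non-antichain analysis for $\#\B=3$ and by a beat-point argument for $\#\B=2$, where $b_1<b_2$ forces any minimal $c<b_1$ to be a beat point. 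Note that your $\#\B=2$ step silently assumes $\B$ is an antichain, which is only automatic once height $3$ is genuinely excluded.)

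Your final $(3,3,3)$ step is a feasible plan but not yet a proof, and here the paper prunes far more efficiently than a raw $\mathrm{Sym}(3)\times\mathrm{Sym}(3)$ enumeration: since $\Ss\subseteq\mnl(X)\times\mxl(X)$ gives $\#\Ss\le 9$, the Euler identity $\#\Ss=5+\sum_{j=1}^3(\alpha_j-1)(\beta_j-1)$ forces at least five of the six degrees $\alpha_j,\beta_j$ to equal $2$; two structural claims (each $\#(\widehat{F}_{b_k}\cap\widehat{F}_{b_l})=1$, and dually for the down-sets of degree-$2$ middle points), proved via Lemma~\ref{lemma_no_subspace_finite_S2}, an $S^2$--subcomplex obstruction and $\chi(X)=0$ contradictions, then reduce everything to exactly two incidence patterns: the hexagonal one with $|\K(X)|\simeq S^1$ (discarded) and the space of Figure~\ref{fig_ht_non-contractible} (with its opposite arising from the initial normalization $\alpha_1=\alpha_2=\alpha_3=2$). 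You correctly anticipated that $\chi$ alone cannot separate these, so your homology-per-type enumeration would work if carried out; that part is labor, not a gap. The height-$3$ elimination, however, is a missing idea that your sketch cannot repair as stated.
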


\begin{proof}
By \ref{theo_ht_non_contractible_1} we may assume that $X$ is a finite T$_0$--space without beat points. By \ref{rem_mxl_mnl}, $\#\mxl(X)\geq 2$. If $\#\mxl(X)= 2$, $X$ is homeomorphic to $\Su(X-\mxl(X))$ by \ref{rem_mxl_mnl}. Note that $X-\mxl(X)$ does not have beat points. Since $\#(X-\mxl(X))=7$, from \cite[theorem 5.7]{cianci2015splitting} we obtain that $\pi_1(X-\mxl(X),x_0)$ is a free group for all $x_0\in X-\mxl(X)$. Thus $X-\mxl(X)$ is homotopically trivial by \ref{lemma_homotopically_trivial_suspension}, which contradicts \ref{theo_ht_non_contractible_1}. Therefore $\#\mxl(X)\geq 3$. And applying this argument to $X^\op$ we obtain that $\#\mnl(X)\geq 3$.

Let $\B=X-(\mxl(X)\cup\mnl(X))$. From the first item of \ref{rem_mxl_mnl} it follows that $\#\B\leq 3$. By \ref{prop_ht_of_height_1}, $h(X)\geq 2$. Thus $\B\neq\varnothing$.

We will analize three cases which correspond to the possible cardinalities of the subset $\B$.

\underline{Case 1}: $\# \B=1$. Let $b$ be the only element of $B$. Let $n=\#\mnl(X)$, $\alpha=\#\widehat{F}_b$ and $\beta=\#\widehat{U}_b$. By \ref{rem_mxl_mnl}, $\alpha\geq 2$ and $\beta\geq 2$. Let $\Rel\subseteq X\times X$ be the order relation of $X$ and let $\Ss=\Rel\cap(\mnl(X)\times\mxl(X))$.

Let $\Ss_1=\Rel\cap(\widehat{U}_b\times\widehat{F}_b)$, $\Ss_2=\Rel\cap(\widehat{U}_b\times(\mxl(X)-\widehat{F}_b))$ and $\Ss_3=\Rel\cap((\mnl(X)-\widehat{U}_b)\times\mxl(X))$. Clearly, $\Ss_1$, $\Ss_2$ and $\Ss_3$ are pairwise disjoint and $\Ss=\Ss_1\cup \Ss_2\cup \Ss_3$.

Note that $\#\Ss_1=\alpha\beta$. Also, if $z\in \widehat{U}_b$ then $\widehat{F}_z\supsetneq F_b$ since $z$ is not a beat point of $X$. Hence $\widehat{F}_z\cap (\mxl(X)-\widehat{F}_b)\neq\varnothing$ for all $z\in \widehat{U}_b$. Thus $\#\Ss_2\geq \beta$. On the other hand, from \ref{rem_mxl_mnl} we obtain that $\#\Ss_3\geq 2\#(\mnl(X)-\widehat{U}_b)=2(n-\beta)$.

Thus 
\begin{displaymath}
\#\Ss\geq \alpha\beta + \beta + 2(n-\beta)\geq \alpha\beta + 2 + 2(n-\beta) \ .
\end{displaymath}

Proceeding in a similar way we also obtain that
\begin{displaymath}
\#\Ss \geq \alpha\beta + 2 + 2 \# (\mxl(X)-\widehat{F}_b)= \alpha\beta + 2 + 2 (8-n-\alpha) \ .
\end{displaymath}
Hence
\begin{displaymath}
\#\Ss \geq \alpha\beta + 2 + 2\max\{n-\beta,8-n-\alpha\}\geq \alpha\beta + 2 + (n-\beta) + (8-n-\alpha) = \alpha\beta+10-\alpha-\beta \ .
\end{displaymath}
Thus we obtain that
\begin{displaymath}
\chi(X)=9-(\#\Ss+\alpha+\beta)+\alpha\beta \leq 9 - \alpha\beta - 10 + \alpha\beta = -1 \ .
\end{displaymath}
Hence $X$ is not homotopically trivial. Therefore this case is not possible.

\underline{Case 2}: $\# \B=2$. Without loss of generality we may assume that $\#\mxl(X)=3$ and $\#\mnl(X)=4$. Let $b_1$ and $b_2$ be the elements of $\B$. 

We will prove that $\B$ is an antichain. Indeed, if $b_1<b_2$ then $\# \widehat{F}_{b_1} \geq \#\widehat{F}_{b_2}+2 \geq 4$ by \ref{lemma_card_Ua_Ub} and \ref{rem_mxl_mnl}. Thus $\widehat{F}_{b_1}=\{b_2\}\cup\mxl(X)$.
Let $c\in\mnl(X)\cap U_{b_1}$. Then $F_{b_1}\subseteq \widehat{F}_c \subseteq X-\mnl(X) = \B\cup\mxl(X) = F_{b_1}$. Hence $\widehat{F}_c=F_{b_1}$ and then $c$ is a beat point of $X$, which contradicts our assumptions. Therefore $\B$ must be an antichain.

Hence $h(X)=2$. For $j\in\{1,2\}$, let $\alpha_j=\#\widehat{F}_{b_j}$ and $\beta_j=\#\widehat{U}_{b_j}$. Note that $\alpha_j\geq 2$ and $\beta_j\geq 2$ for all $j\in\{1,2\}$ by \ref{rem_mxl_mnl}. As in the previous case, let $\Rel\subseteq X\times X$ be the order relation of $X$ and let $\Ss=\Rel\cap(\mnl(X)\times\mxl(X))$. Thus
\begin{displaymath}
\chi(X)=9-(\alpha_1+\beta_1+\alpha_2+\beta_2+\#\Ss)+\alpha_1\beta_1+\alpha_2\beta_2=7-\#\Ss+\sum_{j=1}^2 (\alpha_j-1)(\beta_j-1)\ .
\end{displaymath}
We will analyze two subcases.

\underline{Case 2.1}: $\#(\widehat{F}_{b_1}\cap \widehat{F}_{b_2})=1$. In this case, $\alpha_1=\alpha_2=2$ and $\widehat{F}_{b_1}\cup \widehat{F}_{b_2}=\mxl(X)$. Let $a$ be the only point of $\widehat{F}_{b_2}-\widehat{F}_{b_1}$.

We will prove now that $\beta_2\leq 3$. Suppose that $\beta_2>3$. Then $\beta_2=4$ and $\widehat{U}_{b_2}=\mnl(X)$. Thus $\widehat{U}_{a}=U_{b_2}$ and hence $a$ is a beat point of $X$, which entails a contradiction. Therefore $\beta_2\leq 3$.

We will prove now that $\mxl(X)\subseteq F_c$ for all $c\in \widehat{U}_{b_1}$. Let $c\in \widehat{U}_{b_1}$. If $b_2>c$ then $F_c\supseteq F_{b_1}\cup F_{b_2}\supseteq \mxl(X)$. If $b_2 \ngtr c$ then $F_{b_1}\subseteq \widehat{F}_c \subseteq \{b_1\}\cup\mxl(X)= F_{b_1}\cup \{a\}$. And since $c$ is not a beat point of $X$, we obtain that $\widehat{F}_c = F_{b_1}\cup \{a\}$ and hence $\mxl(X)\subseteq F_c$.

Since $\mxl(X)\subseteq F_c$ for all $c\in \widehat{U}_{b_1}$ and applying \ref{rem_mxl_mnl}, we obtain that 
\begin{displaymath}
\#\Ss\geq 3\#\widehat{U}_{b_1} + 2 \#(\mnl(X)-\widehat{U}_{b_1})= 3 \beta_1 + 2 (4-\beta_1)=\beta_1+8 \ .
\end{displaymath}
Thus
\begin{displaymath}
\chi(X)=7-\#\Ss+\sum_{j=1}^2 (\alpha_j-1)(\beta_j-1)\leq 7 - \beta_1 - 8 + \beta_1-1+\beta_2-1=\beta_2-3 \leq 0 
\end{displaymath}
and hence $X$ is not homotopically trivial.

\underline{Case 2.2}: $\#(\widehat{F}_{b_1}\cap \widehat{F}_{b_2})\geq 2$. By \ref{lemma_no_subspace_finite_S2}, $\#(\widehat{U}_{b_1}\cap \widehat{U}_{b_2})\leq 1$.

Since $\beta_1\geq 2$ and $\beta_2\geq 2$ we obtain that $\widehat{U}_{b_1}-\widehat{U}_{b_2}\neq\varnothing$ and $\widehat{U}_{b_2}-\widehat{U}_{b_1}\neq\varnothing$. Let $c \in \widehat{U}_{b_1}-\widehat{U}_{b_2}$. We have that $F_{b_1}\subseteq\widehat{F}_{c}\subseteq \{b_1\}\cup\mxl(X)$. And since $c$ is not a beat point of $X$ we obtain that $\widehat{F}_{c}\neq F_{b_1}$. Therefore $\mxl(X)\nsubseteq F_{b_1}$ which implies that $\alpha_1 = 2$. Thus $\#(\mxl(X)-F_{b_1})= 1$ and since $\widehat{F}_{c}\neq F_{b_1}$ we obtain that $\mxl(X)\subseteq F_c$.

In a similar way we obtain that $\alpha_2 = 2$ and that $\mxl(X)\subseteq F_c$ for all $c\in \widehat{U}_{b_2}-\widehat{U}_{b_1}$. Thus, applying \ref{rem_mxl_mnl}, we obtain that $\#\Ss\geq 3+3+2+2 = 10$.

Hence
\begin{displaymath}
\chi(X)=7-\#\Ss+\sum_{j=1}^2 (\alpha_j-1)(\beta_j-1)\leq 7 -10 + \beta_1-1+\beta_2-1=\beta_1+\beta_2-5 \leq 0 
\end{displaymath}
where the last inequality follows from the fact that $\#(\widehat{U}_{b_1}\cap \widehat{U}_{b_2})\leq 1$. Therefore $X$ is not homotopically trivial.

\underline{Case 3}: $\# \B=3$. Note that $\#\mxl(X)=\#\mnl(X)=3$. Let $b_1$, $b_2$ and $b_3$ be the elements of $\B$.

Suppose that $\B$ is a chain. Without loss of generality we may assume that $b_1<b_2<b_3$. Applying \ref{lemma_card_Ua_Ub} and \ref{rem_mxl_mnl} we obtain that
\begin{displaymath}
\# \widehat{F}_{b_1}\geq \# \widehat{F}_{b_2} +2 \geq \# \widehat{F}_{b_3} + 4 \geq 6
\end{displaymath}
but this can not be possible since $\widehat{F}_{b_1}\subseteq \{b_2,b_3\}\cup \mxl(X)$. Thus $\B$ is not a chain.

Hence, without loss of generality, we may assume that $b_1$ and $b_3$ are incomparable.

Suppose now that $\B$ is not an antichain. Without loss of generality we may assume that $b_2$ and $b_1$ are comparable elements, and considering $X^\op$ if necessary, we may suppose that $b_2>b_1$. Note that $b_2\nless b_3$ since $\B$ is not a chain. Hence $\widehat{F}_{b_2}\subseteq \mxl(X)$. By \ref{rem_mxl_mnl}, $\#\widehat{F}_{b_2}\geq 2$. Let $a_1$ and $a_2$ be distinct elements of $\widehat{F}_{b_2}$ and let $a_3$ be the remaining maximal element of $X$. By \ref{lemma_card_Ua_Ub}, $\#\widehat{F}_{b_1}\geq \#\widehat{F}_{b_2}+2 \geq 4$ and since $b_1$ and $b_3$ are incomparable it follows that $\widehat{F}_{b_1}=\{b_2,a_1,a_2,a_3\}$ and $\widehat{F}_{b_2}=\{a_1,a_2\}$.

Now, observe that $\widehat{F}_{b_3}\subseteq \{b_2,a_1,a_2,a_3\}$ since $b_1$ and $b_3$ are incomparable. We claim that $\widehat{F}_{b_3}$ is not path-connected. Indeed, if $b_3<b_2$ then proceeding as in the previous paragraph we obtain that $\widehat{F}_{b_3}=\{b_2,a_1,a_2,a_3\}$ which is not path-connected. And if $b_3\nless b_2$ then $\widehat{F}_{b_3}\subseteq\mxl(X)$ and hence $\widehat{F}_{b_3}$ is a discrete subspace and $\#\widehat{F}_{b_3}\geq 2$ by \ref{rem_mxl_mnl}. Thus $\widehat{F}_{b_3}$ is not path-connected.

By \ref{rem_mxl_mnl}, $\# (\widehat{U}_{b_1}\cap\mnl(X))\geq 2$. Let $c_1$ and $c_2$ be distinct elements of  $\widehat{U}_{b_1}\cap\mnl(X)$ and let $c_3$ be the remaining minimal element of $X$. Since $c_1<b_1$ and $c_1$ is not a beat point of $X$ we obtain that $\{b_1,b_2,a_1,a_2,a_3\}=F_{b_1}\subsetneq\widehat{F}_{c_1}\subseteq X-\mnl(X)$. Hence $\widehat{F}_{c_1}=\{b_1,b_2,b_3,a_1,a_2,a_3\}$. In a similar way, $\widehat{F}_{c_2}=\{b_1,b_2,b_3,a_1,a_2,a_3\}$.

From \ref{prop_relative_homology} we obtain that 
$H_1(\widehat{F}_{c_1})\cong H_1(\widehat{F}_{c_1},F_{b_1}) \cong \widetilde{H}_0(\widehat{F}_{b_3})\neq 0$
since $\widehat{F}_{b_3}$ is not path-connected.

Applying \ref{prop_relative_homology} again we obtain that 
$H_2(X)\cong H_2(X,F_{c_2})\cong H_1(\widehat{F}_{c_1})\oplus H_1(\widehat{F}_{c_3}) \neq 0$
and hence $X$ is not homotopically trivial. Therefore $\B$ must be an antichain.

Hence $h(X)=2$. For $j\in\{1,2,3\}$, let $\alpha_j=\#\widehat{F}_{b_j}$ and $\beta_j=\#\widehat{U}_{b_j}$. Note that $\alpha_j\geq 2$ and $\beta_j\geq 2$ for all $j\in\{1,2,3\}$ by \ref{rem_mxl_mnl}. As in the previous cases, let $\Rel\subseteq X\times X$ be the order relation of $X$ and let $\Ss=\Rel\cap(\mnl(X)\times\mxl(X))$. Thus
\begin{displaymath}
1=\chi(X)=9-\left(\sum_{j=1}^3\alpha_j+\sum_{j=1}^3\beta_j +\#\Ss\right)+\sum_{j=1}^3\alpha_j\beta_j = 6-\#\Ss+\sum_{j=1}^3 (\alpha_j-1)(\beta_j-1)\ .
\end{displaymath}
Hence
\begin{displaymath}
\#\Ss= 5+\sum_{j=1}^3 (\alpha_j-1)(\beta_j-1)\geq 8\ .
\end{displaymath}
Thus $\#\Ss=8$ or $\#\Ss=9$ and hence $\sum\limits_{j=1}^3 (\alpha_j-1)(\beta_j-1)\in\{3,4\}$.

Therefore at least five of the numbers $\alpha_1$, $\alpha_2$, $\alpha_3$, $\beta_1$, $\beta_2$ and $\beta_3$ are equal to $2$ and the remaining one might be $2$ or $3$.

Without loss of generality and considering $X^\op$ if necessary, we may assume that $\alpha_1=\alpha_2=\alpha_3=2$.

\underline{Claim 1}: $\#(\widehat{F}_{b_k}\cap\widehat{F}_{b_l})=1$ for all $k,l\in\{1,2,3\}$ with $k\neq l$. 

Suppose that there exist $k,l\in\{1,2,3\}$ with $k\neq l$ such that $\#(\widehat{F}_{b_k}\cap\widehat{F}_{b_l})=2$. Without loss of generality we may assume that $\#(\widehat{F}_{b_1}\cap\widehat{F}_{b_2})=2$. 

Thus $\#(\widehat{U}_{b_1}\cap \widehat{U}_{b_2})\leq 1$ by \ref{lemma_no_subspace_finite_S2}. And since $\#\mnl(X)=3$ we obtain that $\#(\widehat{U}_{b_1}\cap \widehat{U}_{b_2})=1$. Hence $\beta_1=\beta_2=2$. Thus $\widehat{U}_{b_1}\cup \widehat{U}_{b_2}=\mnl(X)$.

Let $a_1$ and $a_2$ be the elements of $\widehat{F}_{b_1}\cap\widehat{F}_{b_2}$ and let $a_3$ be the remaining maximal element of $X$. Note that $U_{a_1}\supseteq \mnl(X)$ and $U_{a_2}\supseteq \mnl(X)$.

If $\{a_1,a_2\}\subseteq F_{b_3}$ then $\#(\widehat{U}_{b_1}\cap \widehat{U}_{b_3})\leq 1$ and $\#(\widehat{U}_{b_2}\cap \widehat{U}_{b_3})\leq 1$ by \ref{lemma_no_subspace_finite_S2}. Thus $\#(\widehat{U}_{b_1}\cap \widehat{U}_{b_3})=1$ and $\#(\widehat{U}_{b_2}\cap \widehat{U}_{b_3})=1$. Hence $|\K(X-\{a_3\})|$ is homeomorphic to $S^2$ contradicting \ref{lemma_subspace_with_non_trivial_H2}. Thus $\{a_1,a_2\}\nsubseteq F_{b_3}$.

Hence, $a_3\in \widehat{F}_{b_3}$. Since $\alpha_1=\alpha_2=2$, we obtain that $b_1\nless a_3$ and $b_2\nless a_3$. And since $a_3$ is not a beat point of $X$ we obtain that $U_{b_3}\subsetneq\widehat{U}_{a_3}\subseteq \{b_3\}\cup\mnl(X)$. But $\# U_{b_3}\geq 3$. Thus $\# U_{b_3}= 3$, $\beta_3=2$ and $\widehat{U}_{a_3}=\{b_3\}\cup\mnl(X)$. Hence $\Ss=\mnl(X)\times\mxl(X)$. Thus
\begin{displaymath}
\chi(X)=6-\#\Ss+\sum\limits_{j=1}^3 (\alpha_j-1)(\beta_j-1)=6-9+3=0
\end{displaymath}
which entails a contradiction. This proves claim 1.

Hence $X-\mnl(X)$ is homeomorphic to the following space
\begin{displaymath}
\begin{tikzpicture}[x=3cm,y=3cm]
	\tikzstyle{every node}=[font=\footnotesize]
	
	\foreach \x in {1,...,3} \draw (0.5*\x,1) node(a\x){$\bullet$};
	\foreach \x in {1,...,3} \draw (0.5*\x,0.5) node(b\x){$\bullet$};
	
	\foreach \x in {1,2} \draw (a1)--(b\x);
	\foreach \x in {1,3} \draw (a2)--(b\x);
	\foreach \x in {2,3} \draw (a3)--(b\x);
	
	\end{tikzpicture}
\end{displaymath}

\underline{Claim 2}: If $k,l\in\{1,2,3\}$ are distinct elements such that $\beta_k=\beta_l=2$ then $\#(\widehat{U}_{b_k}\cap\widehat{U}_{b_l})=1$.

Suppose that there exist distinct elements $k,l\in\{1,2,3\}$ such that $\beta_k=\beta_l=2$ and $\#(\widehat{U}_{b_k}\cap\widehat{U}_{b_l})=2$. Let $m$ be the remaining element of $\{1,2,3\}$ and let $c_1$ and $c_2$ be the elements of $\widehat{U}_{b_k}\cap\widehat{U}_{b_l}$. Note that $\mxl(X)\subseteq F_{c_1}\cap F_{c_2}$. If $\{c_1,c_2\}\subseteq \widehat{U}_{b_m}$ then $|\K(X-\{c_3\})|$ is homeomorphic to $S^2$ contradicting \ref{lemma_subspace_with_non_trivial_H2}. Thus $\{c_1,c_2\}\nsubseteq \widehat{U}_{b_m}$. Hence $\beta_m=2$ and $c_3\in U_{b_m}$. If either $c_3<b_k$ or $c_3<b_l$ then $\mxl(X)\subseteq F_{c_3}$. Otherwise, since $c_3$ is not a beat point of $X$ we obtain that $F_{b_m}\subsetneq\widehat{F}_{c_3}\subseteq \{b_m\}\cup\mnl(X)$. As $\# F_{b_m}=3$, it follows that $\widehat{F}_{c_3}=\{b_m\}\cup\mnl(X)$ and hence $\mxl(X)\subseteq F_{c_3}$. Thus $\mxl(X)\subseteq F_{c_3}$ in any case.

Hence $\Ss=\mnl(X)\times\mxl(X)$. Thus 
\begin{displaymath}
\chi(X)=6-\#\Ss+\sum_{j=1}^3 (\alpha_j-1)(\beta_j-1)=6-9+3=0
\end{displaymath}
which entails a contradiction. This proves claim 2.

Now, let $a_1$, $a_2$ and $a_3$ be the only elements of $\widehat{F}_{b_1}\cap\widehat{F}_{b_2}$, $\widehat{F}_{b_1}\cap \widehat{F}_{b_3}$ and $\widehat{F}_{b_2}\cap \widehat{F}_{b_3}$ respectively. Without loss of generality we may assume that $\beta_1=\beta_3=2$. Thus $\#(\widehat{U}_{b_1}\cap\widehat{U}_{b_3})=1$ by claim 2. Let $c_1$, $c_2$ and $c_3$ be the only elements of $\widehat{U}_{b_1}-\widehat{U}_{b_3}$, $\widehat{U}_{b_1}\cap \widehat{U}_{b_3}$ and $\widehat{U}_{b_3}-\widehat{U}_{b_1}$ respectively.

If $\beta_2=2$ then $\#(\widehat{U}_{b_2}\cap\widehat{U}_{b_1})=1$ and $\#(\widehat{U}_{b_2}\cap\widehat{U}_{b_3})=1$ by claim 2. Then $X$ is homeomorphic to the following space
\begin{displaymath}
\begin{tikzpicture}[x=3cm,y=3cm]
	\tikzstyle{every node}=[font=\footnotesize]
	
	\foreach \x in {1,...,3} \draw (0.5*\x,1) node(a\x){$\bullet$};
	\foreach \x in {1,...,3} \draw (0.5*\x,0.5) node(b\x){$\bullet$};
	\foreach \x in {1,...,3} \draw (0.5*\x,0) node(c\x){$\bullet$};
	
	\foreach \x in {1,2} \draw (a1)--(b\x);
	\foreach \x in {1,3} \draw (a2)--(b\x);
	\foreach \x in {2,3} \draw (a3)--(b\x);
		
	\foreach \x in {1,2} \draw (c1)--(b\x);
	\foreach \x in {1,3} \draw (c2)--(b\x);
	\foreach \x in {2,3} \draw (c3)--(b\x); 
\end{tikzpicture}
\end{displaymath}
Hence $|\K(X)|$ is homotopy equivalent to $S^1$ and then $X$ is not homotopically trivial.

Thus $\beta_2=3$ and hence $X$ is homeomorphic to the space of figure \ref{fig_ht_non-contractible}.
\end{proof}

\bibliographystyle{acm}

\bibliography{references}

\end{document}